\newcolumntype{L}[1]{>{\raggedright\let\newline\\\arraybackslash\hspace{0pt}}m{#1}}
\newcolumntype{C}[1]{>{\centering\let\newline\\\arraybackslash}m{#1}}
\newcolumntype{R}[1]{>{\raggedleft\let\newline\\\arraybackslash\hspace{0pt}}m{#1}}
\theoremstyle{thmit} 
\newtheorem{thm}{Theorem}
\newtheorem{lem}[thm]{Lemma}
\theoremstyle{definition}
\theoremstyle{remark}
\theoremstyle{thmrm} 
\newtheorem*{oldproof}{Proof}
\renewenvironment{proof}[1][{}]{\begin{oldproof}[#1]}{\qed\end{oldproof}}
\newcommand{\GL}{\mathrm{GL}}
\newcommand{\symme}{\mathrm{Sym}}
\newcommand{\alter}{\mathrm{Alt}}
\begin{document}

\title{The character table of a sharply 5-transitive subgroup of $\alter(12)$}%
\author{Nick Gill}
\address{Department of Mathematics,\\ University of South Wales, \\Treforest, CF37 1DL, U.K. \\{ \tt nick.gill@southwales.ac.uk}}

\author{Sam Hughes}
\address{Department of Mathematics, \\University of South Wales, \\Treforest, CF37 1DL, U.K. \\{ \tt just.sam.hughes@gmail.com}}

\subjclass[2000]{20C15; 20B20; 20C34}%
\keywords{Mathieu groups, sporadic groups, character table, permutation group, multiply transitive}%
\date{\today}%
\begin{abstract}
In this paper we calculate the character table of a sharply $5$-transitive subgroup of $\alter(12)$, and of a sharply $4$-transitive subgroup of $\alter(11)$. Our presentation of these calculations is new because we make no reference to the sporadic simple Mathieu groups, and instead deduce the desired character tables using only the existence of the stated multiply transitive permutation representations.
\end{abstract}
\maketitle

\section{Introduction}

In this paper we prove the following theorem.

\begin{thm}\label{t: main}
\hspace{1cm}
\begin{enumerate}
 \item If $G$ is a sharply $5$-transitive subgroup of $\alter(12)$, then the character table is given by Table~\ref{t: G12}.
 \item If $G$ is a sharply $4$-transitive subgroup of $\alter(11)$, then the character table is given by Table~\ref{t: G11}.
\end{enumerate}
\end{thm}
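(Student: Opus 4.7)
The plan is to derive the two character tables in tandem, establishing part~(2) first and then feeding it into the proof of part~(1): in a sharply $5$-transitive subgroup of $\alter(12)$ the stabilizer of a point is a sharply $4$-transitive subgroup of $\alter(11)$, so the larger table can be built by induction and restriction from the smaller.

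The first step is to enumerate the conjugacy classes of $G$. Sharp $k$-transitivity forces every non-identity element to fix at most $k-1$ points, which, together with $|G|=7920$ (respectively $|G|=95040$) and the hypothesis $G\le\alter$, cuts the admissible cycle types down to a short finite list. I would compute the size of each conjugacy class by a double-counting argument on $j$-cycles, and separately decide which $\symme$-classes split into two $\alter$-classes by analysing the centralizer structure forced by sharp transitivity.

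Next I would construct an initial batch of irreducible characters from natural permutation actions. The defining action on $n$ points is $2$-transitive, so its permutation character decomposes as $1_G+\chi$ with $\chi$ irreducible of degree $n-1$; the actions on ordered and unordered $j$-subsets for $j\le k$ decompose into further irreducibles, and all relevant inner products can be evaluated from the cycle data of the previous step, using that an element with $f(g)$ fixed points stabilises exactly $\binom{f(g)}{j}$ unordered $j$-subsets. For part~(1) I would then use Frobenius reciprocity to induce and restrict characters between $G$ and its point stabilizer, leveraging the table from part~(2). Tensor products of characters already in hand, reduced against one another by orthogonality, supply the remaining irreducibles.

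Finally, once the number of irreducibles produced matches the number of conjugacy classes from the first step, any still-unknown entries are pinned down by column orthogonality combined with the known class sizes. I expect the main obstacle to lie in that first step: without appealing to $G\cong M_{11}$ or $G\cong M_{12}$, one must both establish the existence of elements of every claimed order and determine the fusion pattern of $\symme$-classes into $\alter$-classes, working solely from sharp multi-transitivity. Once this combinatorial groundwork is in place, the character-theoretic assembly is comparatively routine.
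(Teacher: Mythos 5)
Your overall strategy coincides with the paper's: determine the conjugacy classes from sharp transitivity, then assemble the characters from the $2$-transitive permutation character, tensor squares, induction from the point stabilizer, and orthogonality, building the degree-$12$ table on top of the degree-$11$ one (the paper actually runs the whole chain $G_9\subset G_{10}\subset G_{11}\subset G_{12}$). However, two of your steps conceal genuine gaps. First, the class enumeration is not a matter of double-counting cycles and deciding which $\symme$-classes split in $\alter$: the splitting that actually occurs is of a single cycle type into several $G$-classes of \emph{different} sizes (e.g.\ type $4^2$ in $G_{10}$ splits into classes of sizes $90$ and $180$, and type $11^1$ and $2^18^1$ each split into two classes), and ruling types in or out requires structural input that your plan does not supply -- for instance the paper needs a lemma that a Sylow $3$-subgroup of $G_{12}$ is the Heisenberg group $He(3)$ (to exclude elements of order $9$), an analysis showing a Sylow $2$-subgroup has orbit structure $8\!-\!4$ (to produce elements of type $4^18^1$), and several Sylow/normalizer counts to fix class sizes such as $|C_G(g)|=54$ for $g$ of type $3^3$. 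Your own caveat identifies this as the main obstacle, but ``centralizer structure forced by sharp transitivity'' is not yet an argument.

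Second, the ``comparatively routine'' assembly also needs ingredients you omit. To finish $G_{11}$ the paper must know that the four remaining irreducibles fall into two complex-conjugate pairs; this comes from the theorem that the number of real irreducible characters equals the number of real classes, after which the degree equation $d_1^2+d_2^2=356=10^2+16^2$ has a unique solution -- column orthogonality alone, without the pairing, does not close this. For $G_{12}$, permutation characters on $j$-subsets only see restrictions of two-row partitions $\chi_{(12-i,i)}$; the paper has to restrict characters such as $\chi_{(9,1,1,1)}$, $\chi_{(8,1,1,1,1)}$ and $\chi_{(3,2,1)}$ from $\symme(12)$ and then solve a small system of inner-product relations to extract five irreducibles simultaneously. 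You would need either these restrictions (equivalently, higher exterior powers and further tensor decompositions) or some replacement for them; as stated, your toolkit does not obviously reach all $15$ irreducibles of degree up to $176$.
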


This theorem is not new -- item (1) is a consequence of the fact that $M_{12}$ is the unique sharply $5$-transitive subgroup of $\alter(12)$, and the fact that the character table of $M_{12}$ is known; item (2) is a consequence of the two analogous statements for $M_{11}$.

Our proof of the theorem is new, however, because it makes no reference whatsoever to the groups $M_{11}$ and $M_{12}$ but, instead, deduces the character table using nothing more than the stated assumptions about sharp multiple transitivity.

Our interest in proving this theorem stems from our study of Frobenius' famous 1904 paper in which (amongst many other things) he calculates the character table of $M_{12}$ and $M_{24}$ \cite{frobenius}. We were curious to understand Frobenius' methods because, in the late nineteenth century, there appeared to be some lingering doubt as to the ``existence'' of the Mathieu groups (that is to say, people questioned whether the permutations that Mathieu wrote down in his original paper \cite{mathieu} generated alternating groups, rather than any genuinely ``new'' groups). As late as 1897, Miller published a paper claiming that $M_{24}$ did not exist \cite{miller}, although he retracted this claim soon after \cite{miller2}.

In fact, studying Frobenius' 1904 paper, it seems that Frobenius was in no doubt as to the existence of the Mathieu groups and, indeed, he uses specific properties of these groups when he calculates their character tables (see the {\tt MathOverflow} discussion on this subject for more detail \cite{mo}). Nonetheless, we were left wondering whether, in principle, Frobenius could have calculated the character table of $M_{12}$ using nothing more than the property of sharp $5$-transitivity -- the main result of this paper confirms that the answer to this question is ``yes''! In particular, note that the proof of Theorem~\ref{t: main} below uses little more than the basics of character theory, all of which would have been available to Frobenius in 1904 -- the results that we make use of are summarized at the start of \S\ref{ch}.

\subsection{Further work}
There are two natural avenues for extending the current work. First, it would be nice if the two occurrences of ``$\alter(12)$'' in the theorem could be replaced by ``$\symme(12)$''. It seems quite plausible that such a theorem could be proved using the methods in the current paper, however it would likely make the conjugacy class calculations considerably more complicated and so we have not investigated this in detail.

Second, one wonders about proving the analogous theorem for $M_{24}$, something like: ``If $G$ is a $5$-transitive subgroup of $\alter(24)$ of order 244823040, then the character table of $G$ is as follows...'' It would be nice to prove such a theorem using only elementary character theory, although one might expect that the details would be rather onerous.

\subsection{Structure, notation and thanks}

In what follows we will consider groups $G_9, G_{10}, G_{11}$ and $G_{12}$. For $i=9,\dots, 12$, the group $G_i$ denotes any sharply $(i-7)$-transitive subgroup of $\alter(i)$ on $i$ points.

In \S\ref{cc}, we deduce the conjugacy class structure of $G_9, G_{10}, G_{11}$ and $G_{12}$; in \S\ref{ch}, we deduce the character table of $G_9, G_{10}, G_{11}$ and $G_{12}$.

All of our notation is entirely standard. We remind the reader that a character of $G$ is called {\bf real} if it takes real values for all elements of $G$; similarly, a conjugacy class $C$ of $G$ is called {\bf real} if any element of $C$ is conjugate to its inverse. The connection between these two concepts is given in Theorem~\ref{ComplexCharsThm}, and will be exploited in \S\ref{ch}.

In the course of this research, we asked a question on the website {\tt MathOverflow}. We are particularly grateful to Frieder Ladisch, whose answer to our question clarified some of the history behind this area of research \cite{mo}. 

\section{Conjugacy classes}\label{cc}

In this section we calculate the conjugacy class structure of $G_9, G_{10}, G_{11}$ and $G_{12}$. Note that, for $i=9,\dots, 12$, $G_i$ is any sharply $(i-7)$-transitive subgroup of $\alter(i)$ on $i$ points.

We label conjugacy classes via the cycle structure of their elements. Where there is more than one conjugacy class with the same cycle structure, they are distinguished with subscript Roman letters -- see, for instance, Table~\ref{cc G9} which lists the conjugacy classes of $G_9$, three of which contain elements of type $4^2$.

\subsection{The conjugacy classes of \texorpdfstring{$G_9$}{G9}}\label{ccG9}

Since $|G_9|=72$, and is sharply $2$-transitive on $9$ points, it is straightforward to see that $G_9$ has a normal regular subgroup $N$ with $N\cong C_3 \times C_3$. Now the stabilizer of a point, call it $H$, must act (by conjugation) regularly on the non-zero elements of $N$ -- then $H$ is a group of order $8$ that is isomorphic to a subgroup of $\GL_2(3)$, and one can check directly that $H\cong Q_8$; thus $G\cong (C_3\times C_3)\rtimes Q_8$.

The conjugacy classes of $G_9$ can now be written down in Table~\ref{cc G9}. Note that all classes are real.

\begin{table}
 \centering
 \begin{tabular}{|c|c|c|c|c|c|c|}
 \hline
Type & $1^{9}$& $2^4$& $3^3$& $4^2_A$& $4^2_B$& $4^2_C$ \\
 \hline
Size & 1& 9& 8& 18& 18& 18 \\
 \hline
\end{tabular}
\caption{Conjugacy class sizes in $G_9$}\label{cc G9}
\end{table}

\subsection{The conjugacy classes of \texorpdfstring{$G_{10}$}{G10}}\label{ccG10}

The group $G=G_{10}$ has order $720$, and elements that fix at least one point must have cycle structure in the list given in Table~\ref{cc G9}, with the possibility of fusion for the elements of cycle type $4^2$.

When one considers the cycle type of fixed-point-free elements of $G$, one must exclude all elements that have powers that fix elements and that are not of a type listed in Table~\ref{cc G9}. One obtains immediately that the only possible new cycle types are $5^2$ and $8^12^1$.

There must be an element $g\in G$ of type $5^2$, since $5$ divides $|G|$. What is more, since $C_{\alter(10)}(g)=\langle g\rangle$, we conclude that $C_{G}(g)=\langle g\rangle$ and so the conjugacy class containing $g$ has size $144$. Let $P=\langle g\rangle$, a Sylow $5$-subgroup of $G_{10}$. Since $C_G(P)=P$, and $N_G(P)/C_G(P)\lesssim {\rm Aut}(P)\cong C_5$. We conclude that $N_G(P)$ has order dividing $20$. Sylow's theorems tell us that $|G:N_G(P)|\equiv 1\pmod 5$ and we conclude that $|N_G(P)|=20$. Since all conjugates of $P$ intersect trivially, we conclude that $G$ contains $144$ elements of order $5$, hence there is precisely one conjugacy class of elements of type $5^2$.

We now consider elements of types listed in Table~\ref{cc G9}. For elements of type $3^3$, we observe first that an element of this type fixes a unique point and, since the stabilizer of a point contains a unique Sylow $3$-subgroup of $G$, we conclude that each element of type $3^3$ lies in a unique Sylow $3$-subgroup of $G$. Now we use the fact that the stabilizer of a point is maximal in $G$ to conclude that there are precisely $10$ Sylow $3$-subgroups and, therefore, $80$ elements of type $3^3$ in $G$.

All remaining elements, of which there are $450$, must be of type $4^2$ or $2^18^1$. Now let $P$ be a Sylow $2$-subgroup of $G$ and observe that $P$ has two orbits, of size $2$ and $8$. Let $\{9,10\}$ be the smaller orbit; then the stabilizer of $9$ is equal to the stabilizer of $10$ and is equal to $Q_8$, a Sylow $2$-subgroup of $G_9$. The points in the other orbit each have stabilizers in $P$ of size $2$, and there are four distinct stabilizers; this leaves four elements which must be fixed-point-free, and hence are of type $2^18^1$. What is more these elements cannot be central in $P$, otherwise $P$ would be abelian. We conclude that there are at most two conjugacy classes in $G$ of elements of type $2^18^1$, and they have size $90$ (since they do not commute with any elements of odd order). 

In fact, it is clear that any Sylow $2$-subgroup of $G_{10}$ is characterized by its orbit of size $2$. This implies, first, that, since the same is true of elements of type $2^18^1$, each element of type $2^18^1$ is in a unique Sylow $2$-subgroup; it implies, second, that there are precisely $45$ Sylow $2$-subgroups of $G_{10}$, and so there are 180 elements of type $2^18^1$, split into two conjugacy classes of size $90$.

Now there are at most three conjugacy classes in $G$ of elements of type $4^2$ containing a total of $270$ elements; since these elements are real in $G_9$, they are real in $G_{10}$ and so the classes have even order. One of these classes must be squares of elements of order $8$, and so at least one conjugacy class has size $90$; the others have size $90$ or $180$. There are, therefore, two possibilities for elements of type $4^2$: three conjugacy classes of size $90$, or two of size $90$ and $180$. Note that, since there are $180$ elements of type $2^18^1$, there are only $90$ elements that are squares of these. Thus if there are three conjugacy classes of size $90$, then two of these must have a centralizer isomorphic to $C_4\times C_2$. It is easy to check, though, that this is not possible, given that all involutions are of type $2^4$.

The conjugacy classes of $G_{10}$ are summarized in Table~\ref{cc G10}. Note that we have asterisked the two conjugacy classes that are not real -- it is clear that they are the only conjugacy classes that have a chance of being non-real; to see that they are not real, simply observe that there are no elements of type $4^2$ in $\alter(8)$ that send an $8$-cycle to its inverse.\footnote{It is well-known that the group $G_{10}$ is, in fact, $M_{10}$, the unique non-split extension of $\alter(6)$. One can compare our enumeration of the conjugacy classes of $G_{10}$ with the enumeration of conjugacy classes of $M_{10}$ that appears in the ATLAS \cite{atlas}; note that our class $4^2_B$ is labeled $4C$ in the ATLAS; similarly the two classes of elements of order $8$ are labeled $8C$ and $D\mbox{**}$ in the ATLAS.}

\begin{table}
 \centering
 \begin{tabular}{|c|c|c|c|c|c|c|c|c|}
 \hline
Type & $1^{10}$& $2^4$& $3^3$& $4^2_A$ & $4^2_B$ & $5^2$ & ${2^18^1_A}^\ast$ & ${2^18^1_B}^\ast$ \\
 \hline
Size & 1& 45& 80& 90& 180& 144& 90& 90 \\
 \hline
\end{tabular}
\caption{Conjugacy class sizes in $G_{10}$}\label{cc G10}
\end{table}

\subsection{The conjugacy classes of \texorpdfstring{$G_{11}$}{G11}}\label{ccG11}

The group $G=G_{11}$ has order $7920$, and elements that fix at least one point must have cycle structure given in Table~\ref{cc G10}, with the possibility, once again, of fusion for the elements of cycle type $4^2$ or $2^18^1$.

Taking into account the same considerations as before, we obtain that any fixed-point-free elements of $G$ must have cycle type $11^1$ or $2^13^16^1$.

An element, $g$, of type $11^1$ is self-centralizing in $\alter(11)$ and hence, also in $G_{11}$. We conclude that a conjugacy class of this type has size $720$. On the other hand a Sylow $11$-subgroup of $\alter(11)$ has a normalizer of size $55$, hence $N_G(\langle g\rangle)$ has size $11$ or $55$. If the former, then one immediately concludes that there are 10 conjugacy classes of this type; this means that there are a total of $720$ elements in $G$ that are not of type $11^1$. Since a point-stabilizer of $G_{11}$ has size $720$ and does not contain any elements of type $11^1$, we conclude that a point-stabilizer of $G_{11}$ is normal, a contradiction. Thus $|N_G(g)|=55$ and we conclude, furthermore, that there are two conjugacy classes in $G$ of type $11^1$.

We know that there is a unique conjugacy class of elements of type $5^2$ in $G_{10}$, hence the same is true in $G_{11}$. What is more these elements are self-centralizing, hence this class has size $1584$.

Now consider an element, $g$, of type $3^3$. It is clear that $C_G(g)$ contains an involution if and only if there is an element of type $2^13^16^1$. If this is {\bf not} the case, then the conjugacy class of type $3^3$ has size $7920/9 = 880$, and it is the only non-trivial conjugacy class that does not have size divisible by $3$. But now $7920-880-1 = 7039$ is not divisible by $3$, and we have a contradiction. Hence we conclude that $|C_G(g)|$ is even, and contains an element of type $2^13^16^1$. Since the Sylow $2$-subgroup of $C_{\alter(11)}(g)$ is of size $2$, we conclude that $|C_G(g)| = 18$ and the conjugacy class of type $3^3$ has size $440$.

Let $h$ be an element of type $2^4$. Then $g$ is central in a Sylow $2$-subgroup of $G$, and is also centralized by an element of type $3^3$; it is easy to check that it is not centralized by a Sylow $3$-subgroup of $G$, hence the conjugacy class of involutions has size $7920/48 = 165$.

Elements of type $2^18^1$ are self-centralizing, so these conjugacy classes have size $990$; any fusion of the two conjugacy classes would have to take place in a Sylow $2$-subgroup and, since this Sylow $2$-subgroup is the same as for $G_{10}$, we know that there are two such conjugacy classes.

The remaining elements are of type $4^2$ and of type $2^13^16^1$ and they make up the remaining $2310$ elements. The size of the respective conjugacy classes is $7920/8$ and $7920/6$ and since these two numbers sum to $2310$ we know that there is a unique conjugacy class of each type.

The conjugacy classes of $G_{11}$ are now written down in Table~\ref{cc G11}.  Note that we have asterisked the four conjugacy classes that are not real. It is clear that all the other classes are real, and it clear that the classes of type $11^1$ are non-real, since they are non-real in $\alter(11)$; similarly, the classes of type $2^18^1$ are non-real since they are non-real in $G_{10}$ and any putative ``reversing element'' in $G_{11}$ would have to lie in $G_{10}$.

\begin{table}
 \centering
 \begin{tabular}{|c|c|c|c|c|c|c|c|c|c|c|c|}
 \hline
Type & $1^{11}$ & $ 2^4$&$ 3^3$&$ 4^2$&$ 5^2$&$ 2^13^16^1$&$ {2^18^1_A}^\ast$&$ {2^18^1_B}^\ast$&$ {11^1_A}^\ast$ & $ {11^1_B}^\ast$\\
 \hline
Size & $1$ &$ 165$&$ 440$&$ 990$&$ 1584$&$ 1320$&$ 990$&$ 990$&$ 720$&$ 720$ \\
 \hline
\end{tabular}
\caption{Conjugacy class sizes in $G_{11}$}\label{cc G11}
\end{table}

\subsection{The conjugacy classes of \texorpdfstring{$G_{12}$}{G12}}\label{ccG12}

The group $G=G_{12}$ has order $95040$, and elements that fix at least one point must have cycle structure given in Table~\ref{cc G11}, with the possibility of fusion for the elements of order $8$ and $11$.

Taking into account the same considerations as before, we obtain that any fixed-point-free elements of $G$ must have cycle type $2^110^1$, $3^19^1$, $4^18^1$, $6^2$, $2^24^2$, $3^4$, $2^6$. In order to exclude elements of type $3^19^1$, we need a lemma:

\begin{lem}
Let $P$ be a Sylow $3$-subgroup of $G$. Then $P\cong He(3)$.
\end{lem}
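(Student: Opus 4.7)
The plan is to determine $P$ indirectly by computing the normalizer in $G$ of a distinguished $C_3 \times C_3$-subgroup $Q \leq P$, and reading off $P$ as a Sylow $3$-subgroup of that normalizer. The subgroup $Q$ is chosen to be the unique Sylow $3$-subgroup of the pointwise stabilizer $H$ of some triple of points $\{a,b,c\} \subset \{1, \dots, 12\}$; by sharp $5$-transitivity $H$ is a copy of $G_9$, and \S\ref{ccG9} shows that $Q$ is the normal regular $C_3 \times C_3$-subgroup of $H$, with fixed-point set exactly $\{a, b, c\}$.

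First I would show $N_G(Q) = G_{\{a,b,c\}}$, the setwise stabilizer of $\{a,b,c\}$. The containment $N_G(Q) \leq G_{\{a,b,c\}}$ holds because conjugation by any normalizing element must permute the fixed-point set of $Q$; the reverse containment holds because any element stabilizing $\{a,b,c\}$ setwise conjugates $H$ to itself and hence normalizes its unique Sylow $3$-subgroup $Q$. Sharp $5$-transitivity implies transitivity of $G$ on unordered triples, so $|N_G(Q)| = |G|/\binom{12}{3} = 432$.

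Next, I would identify $N_G(Q)$ with the affine group $Q \rtimes \GL_2(3)$, which also has order $432$. The key is that $Q$ acts regularly on the complementary $9$-set $S = \{1, \dots, 12\} \setminus \{a,b,c\}$, so the restriction map $N_G(Q) \to \symme(S)$ is injective (its kernel would have to fix nine points, which sharp $5$-transitivity forbids for non-identity elements). The image lies in $N_{\symme(S)}(Q)$, and for a regular elementary abelian subgroup $Q \leq \symme(S)$ it is standard that $N_{\symme(S)}(Q) = Q \rtimes \mathrm{Aut}(Q) = Q \rtimes \GL_2(3)$; comparing orders upgrades the injection to an isomorphism.

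Finally, since $|P:Q| = 3$ the subgroup $Q$ is normal in $P$, so $P$ sits inside $N_G(Q)$ as a Sylow $3$-subgroup of $Q \rtimes \GL_2(3)$. Every such Sylow subgroup has the form $Q \rtimes \langle u \rangle$ for a unipotent element $u \in \GL_2(3)$ of order $3$, and this semidirect product is precisely $He(3)$. The main difficulty is the intermediate identification of $N_G(Q)$: one must squeeze it between the global index estimate (giving order $432$) and the structural description coming from the faithful $9$-point action, both relying essentially on the sharp $5$-transitivity hypothesis.
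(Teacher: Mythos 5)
Your argument is correct, but it proceeds quite differently from the paper. The paper's proof is a case elimination among the five isomorphism types of groups of order $27$: the three abelian types are excluded because an abelian $P$ would force two order-$9$ subgroups each fixing three points pointwise, whose (necessarily nontrivial) intersection would fix six points, violating the bound that no non-identity element fixes more than four points; the extraspecial group of exponent $9$ is then excluded by a separate argument about where its unique elementary abelian subgroup of order $9$ can sit. Your proof is instead constructive: you identify $N_G(Q)$ for the regular $C_3\times C_3$ inside a triple stabilizer, pin it down as $\mathrm{AGL}_2(3)$ of order $432$ by sandwiching it between the orbit-stabilizer count on unordered triples and the holomorph bound $N_{\symme(S)}(Q)=Q\rtimes\mathrm{Aut}(Q)$ coming from the faithful action on the complementary $9$-set, and then read off the Sylow $3$-subgroup as $Q\rtimes\langle u\rangle\cong He(3)$ for a transvection $u$. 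Each step checks out; the only cosmetic point is that you should say explicitly that it suffices to identify one Sylow $3$-subgroup up to isomorphism, so you may assume $Q\leq P$ after conjugating. What your route buys is more than the lemma asks for: you also determine the setwise triple stabilizer $G_{\{a,b,c\}}\cong\mathrm{AGL}_2(3)$ (a maximal subgroup of $M_{12}$, though the paper deliberately avoids naming that group), and the exponent-$3$ conclusion that motivates the lemma, namely the nonexistence of elements of type $3^19^1$, drops out immediately. The paper's elimination argument is more self-contained, using nothing beyond the fixed-point bound and the classification of groups of order $p^3$, but it yields only the isomorphism type of $P$ itself.
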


Here $He(3)$ is the Heisenberg group over the field of order $3$, a group of order $27$. Note that all non-trivial elements of $He(3)$ have order $3$, hence there are no elements of order $9$ in $G$.

\begin{proof}
Observe that $|P|=27$. There are five groups of order $27$, three abelian and two non-abelian.

Let us suppose first that $P$ is abelian. Suppose that $\Lambda$ is an orbit of $P$ in its action on $\Omega$, and let $\lambda\in\Lambda$. Then, since $P$ is abelian, any elements that fixes $\lambda$ must fix every element in $\Lambda$. Since no element of $G$ fixes more than $4$ points, this means that $|\Lambda|=1$ or $3$. The group $P$ must not fix more than $4$ points, thus there are at least two orbits of $P$ of size $3$, call these $\Lambda_1$ and $\Lambda_2$. Now the orbit-stabilizer theorem asserts that $P$ has a subgroup $P_1$, of order $9$, that fixes every element of $\Lambda_1$; similarly $P$ has a subgroup $P_2$, of order $9$, that fixes every element of $\Lambda_2$. But now $P_1\cap P_2$ is non-trivial (by order considerations) and an element in the intersection fixes at least the 6 points of $\Lambda_1\cup \Lambda_2$. This is a contradiction.

Suppose, then, that $P$ is the non-abelian group of order $27$ that is not $He(3)$. This is the extraspecial group of exponent $9$; it has center, $Z$, of order $3$; it has a normal elementary-abelian subgroup, $P_0$, of order $9$; and all the elements in $P\setminus P_0$ are of order $9$. Since all of the elements of order $9$ are fixed-point-free, and since a stabilizer of a point in $G$ has order divisible by $9$, we conclude that $P_0$ stabilizes a point, indeed it must fix $3$ points. But, since the elements of order $3$ in $P$ have cycle type $3^3$ this implies that all elements of $P$ of order $3$ fix the same $3$ points. This is impossible.
\end{proof}

The same argument as before gives two conjugacy classes of self-centralizing elements of type $11^1$. Similarly there is a single conjugacy class of self-centralizing elements of type $2^13^16^1$. In addition there is an easy counting argument that says that the number of elements that fix exactly four points is $\binom{12}{4}\times 7=3465$. There are only two conjugacy classes that do this -- of type $2^4$ and $4^2$; what is more, using the fact that the stabilizer of $4$ points is $Q_8$, we see that there are six times as many elements of type $4^2$ as there are of type $2^4$. This leaves us with $495$ of type $2^4$ and $2970$ of type $4^2$.

Let $g$ be an element of type $5^2$. Then $|C_G(g)|$ is even if and only if there is an element of type $2^110^1$. Suppose that this is not the case -- then $C_G(g)$ is of order $5$ and all other non-trivial conjugacy classes have order divisible by $5$. But $95040 - 1 - 95040/5$ is not divisible by $5$, a contradiction. We conclude that $|C_G(g)|=10$, and there exist elements of type $2^110^1$. What is more $N_G(\langle g\rangle)$ must be a group of size $40$ with $C_G(g)$ a normal cyclic subgroup of order $10$. If $h$ is a generator of $C_G(g)$, then $N_G(\langle g\rangle)$ acts by conjugation on $C_G(g)=\langle h\rangle$ and is transitive on the elements of order $10$ in $\langle h\rangle$; we conclude that there is a unique conjugacy class of elements of type $2^110^1$, and this conjugacy class has size $9504$.

Let us examine the Sylow $3$-subgroup of $G$ in more detail. The orbits of $P$ must be of size $3$ and $9$, and a count of elements in the stabilizers immediately yields that $P$ contains $14$ elements of type $3^3$ and $12$ that are fixed-point-free, i.e. of type $3^4$. Since $14\not\equiv0\pmod 3$, we conclude that the non-trivial central elements of $P$ are of type $3^3$, and the centralizer of an element of type $3^3$ is divisible by $27$. In addition, for $g$ of type $3^3$, $|C_{\alter(12)}(g)|$ is not divisible by $4$, and so we conclude that $|C_{G_{12}}(g)|=54$.

A straightforward counting argument tells us that there are $35310$ fixed-point-free elements in $G$; we currently have $47190$ elements unaccounted for, of which the only type that is not fixed-point-free are those of type $2^18^1$. We conclude that there are $11880$ of these; on the other hand, consulting Table~\ref{cc G10}, there are $90$ elements in conjugacy class $2^18^1_A$ for the stabilizer of any two letters. Thus each conjugacy class of this type has size $\binom{12}{2}\times 90=11880$, and we conclude that there is a unique conjugacy class of this type.

We are left with the fixed-point-free elements of $G$ -- there are $35310$ of these, and they are of types $2^6$, $3^4$, $2^24^2$, $6^2$,  $4^18^1$ and $2^110^1$ (although we do not yet know if all of these occur). We already know that there are $9504$ elements of type $2^110^1$ and this leave us $25806$ elements for the rest.

Our earlier calculations imply that the centralizer of an element of type $3^4$ is divisible by $9$ but not $27$; we conclude that the centralizer is of size $9$, $18$ or $36$. Suppose that $g$ is of type $3^4$, and that $C_G(g)$ is not of size $9$ -- then there is an element of type $2^6$ that centralizes $g$ and we conclude that $G$ contains an element $h$ of type $6^2$. Now $C_G(h)$ is of size at most $12$. But now, since $\frac19=\frac{1}{12}+\frac{1}{36}$, we conclude that in any case there are at least $|G|/9=10560$ elements of type $3^4$ or $6^2$.

The remaining fixed-point-free elements of $G$ all have order a power of $2$. Let us, therefore, examine $P$, a Sylow $2$-subgroup of $G$. Since $P$ is of size $64$, we know that $P$ does not fix any points; thus the orbit structure of $P$ is either $8\!-\!4$ or $8\!-\!2\!-\!2$ (since a Sylow $2$-subgroup of $G_9$ has orbit structure $8\!-\!1$).

Suppose that the orbit structure is $8\!-\!2\!-\!2$. If $\alpha$ is a point in an orbit of size $2$, then $P_\alpha$ is a normal index $2$ subgroup of $P$, and so $P_\alpha$ fixes all points in the orbit. If $\beta$ is in the other orbit, then $P_\beta$ does likewise, and so $P_\alpha\cap P_\beta$ has size at least $16$ and fixes $4$ points, a contradiction.

Thus the orbit structure is $8\!-\!4$. Let $\Lambda$ be the orbit of size $4$, let $\Gamma$ be the orbit of size $8$, and let $K$ be the kernel of the action on $\Lambda$; then $K\cong Q_8$, the Sylow $2$-subgroup of $G_9$. Since $N=N_{\symme(8)}(K)$ has size $192\!=\!64\!\times\!3$, we conclude that $P$ is isomorphic to a Sylow $2$-subgroup of $N$. What is more the natural map $P\to \symme(4)$, given by considering the action of $P$ on $\Lambda$, yields that $P/K\cong D_4$, the dihedral group of order $8$.

Next observe that $C=C_{\symme(8)}(K)$ is isomorphic to $Q_8$, and $Z(C)=Z(K)$; indeed all non-central elements of $C$ and $K$ act on $\Gamma$ as elements of type $4\!-\!4$ that all square to the same element. Then the non-central elements of $C$ must be of type $4\!-\!4\!-2\!-\!2$ in the action on $\Omega$; in particular $C$ induces the normal Klein $4$-subgroup of $\symme(4)$ in the action on $\Lambda$. But now one can easily check that the four cosets of $K$ in $N\setminus C$ all contain an $8$-cycle; since two of these cosets must induce a $4$-cycle on $\Lambda$, we conclude that $G$ contains elements of type $8\!-\!4$. These elements are self-centralizing in $\alter(12)$, and so likewise in $G$; what is more there are not enough elements left unaccounted for to allow for more than one such conjugacy class. Thus there is a unique conjugacy class of type $8-4$ and it has size $|G|/8 = 11880$.

There are, in addition, elements of type $2^24^2$, and these have a centralizer of size at most $32$; again, a count of remaining elements leads us to conclude that there is a unique conjugacy class of elements of this type and it has size $|G|/32=2970$.

At this stage, then, we have $\frac{83}{720}|G|$ elements unaccounted for; these are of type $3^4$, $6^2$ and/or $2^6$, and we know that at least $\frac{80}{720}|G|$ of them are of type $3^4$ or $6^2$. Thus there are at most $|G|/240$ elements of type $2^6$. If there are no elements of type $2^6$ centralized by an element of order $3$, then we conclude that there is a unique conjugacy class of elements of type $2^6$, and it must have size $|G|/320$. But this also means that there are no elements of type $6^2$, and that the elements of type $3^4$ all have centralizers of size $9$. This does not yield the correct number of elements.

We conclude that there are elements of type $2^6$ centralized by elements of order $3$. By counting remaining elements, we conclude that there is a unique conjugacy class of elements of type $6^2$, and it has size $|G|/12$; similarly, there is a unique conjugacy class of elements of type $3^4$, and it has size $|G|/36$.

There are, therefore, $|G|/240$ elements of type $2^6$; let $g$ be one such. Notice, first, that a Sylow $3$-subgroup of $G$ does not have a subgroup of order $9$ for which all elements are of type $3^4$; we conclude that $|C_G(g)|$ is not divisible by $9$. Next, notice by our arguments above, that we can take $g\not\in Z(KC)$; it is now an easy matter to check that $|C_P(g)|\leq 16$, where $P$ is a Sylow $2$-subgroup of $G$. Thus $|C_G(g)|$ has size at most $240$ and we conclude that there is exactly one conjugacy class of elements of type $2^6$.

We summarise what we have worked out in Table~\ref{cc G12}. Note that we have asterisked the two conjugacy classes that are not real; it is clear that these are the only possible conjugacy classes that have a chance of being non-real, and it is equally clear that they {\bf are} non-real, since they are non-real in $\alter(12)$.

{\small
\begin{table}
 \centering
 \begin{tabular}{|c|c|c|c|c|c|c|c|c|c|c|c|c|c|c|c|}
 \hline
Type & $1^{12}$ & $2^4$& $2^6$ & $3^3$ & $3^4$&$ 4^2$&$ 2^24^2$&$ 5^2$& $2^13^16^1$ & $6^2$ & $2^18^1$ & $4^18^1$ & $2^110^1$ & ${11^1_A}^\ast$ & ${11^1_B}^\ast$ \\
 \hline
Size & $1$ & $495$ & $396$ &$1760$ & $2640$ & $2970$& $2970$ &$ 9504$& $15840$ & $7920$ & $11880$ & $11880$ & $9504$ & $8640$ & $8640$\\
 \hline
\end{tabular}
\caption{Conjugacy class sizes in $G_{12}$}\label{cc G12}
\end{table}
}

\section{Character Tables}\label{ch}

In this section we work out the character tables of $G_9$, $G_{10}$, $G_{11}$ and $G_{12}$. To do this we will need nothing more than the basics of ordinary character theory, along with enough information about the ordinary characters of the symmetric group to calculate irreducible characters for $G_{12}$.

As a reminder we note down five results that will be particularly useful in what follows.

\begin{thm}\emph{\cite[p.~342]{JamesLiebeck}}\label{2transChar}
If $G$ acts 2-transitively on a finite set $\Omega$, then the character given by $\chi(g)=|{\rm fix}(g)|-1$ is irreducible.
\end{thm}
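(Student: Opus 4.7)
The plan is to prove this via the standard inner-product calculation, working with the permutation character $\pi$ on $\Omega$ defined by $\pi(g)=|\mathrm{fix}(g)|$. Since the trivial character $\mathbf{1}_G$ is always a constituent of a transitive permutation character, I would write $\chi=\pi-\mathbf{1}_G$ and reduce the claim ``$\chi$ is irreducible'' to the single statement $\langle\chi,\chi\rangle=1$.

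The main tool will be the well-known formula that, for any two permutation representations on sets $\Omega_1,\Omega_2$ with associated permutation characters $\pi_1,\pi_2$, the inner product $\langle\pi_1,\pi_2\rangle$ equals the number of orbits of $G$ on $\Omega_1\times\Omega_2$. This is an immediate consequence of Burnside's orbit-counting lemma applied to $G$ acting coordinatewise on $\Omega_1\times\Omega_2$, together with the fact that $\pi_1(g)\pi_2(g)$ counts fixed points of $g$ on that product. Specialising to $\Omega_1=\Omega_2=\Omega$, I get $\langle\pi,\pi\rangle$ equal to the number of $G$-orbits on $\Omega\times\Omega$; by hypothesis $G$ is $2$-transitive, and so acts with exactly two orbits on $\Omega\times\Omega$ (the diagonal and its complement), giving $\langle\pi,\pi\rangle=2$. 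The same formula with $\pi_2=\mathbf{1}_G$ (regarded as the permutation character of the trivial action on a one-point set) shows that $\langle\pi,\mathbf{1}_G\rangle$ is the number of $G$-orbits on $\Omega$, which is $1$ by transitivity.

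Expanding $\langle\pi,\pi\rangle=\langle\chi+\mathbf{1}_G,\chi+\mathbf{1}_G\rangle$ then yields
\[
2=\langle\chi,\chi\rangle+2\langle\chi,\mathbf{1}_G\rangle+\langle\mathbf{1}_G,\mathbf{1}_G\rangle
=\langle\chi,\chi\rangle+2(1-1)+1,
\]
so $\langle\chi,\chi\rangle=1$. Since $\chi$ is a difference of characters, to conclude irreducibility I also need to know that $\chi$ is itself a character (not merely a virtual character); this follows from the observation above that $\langle\pi,\mathbf{1}_G\rangle=1$, so $\mathbf{1}_G$ appears with multiplicity exactly one in $\pi$, and hence $\chi=\pi-\mathbf{1}_G$ is a genuine character whose norm is $1$, i.e.\ an irreducible character.

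There is no serious obstacle here; the only conceptual point that needs care is the bookkeeping that converts ``$2$-transitive'' into ``exactly two orbits on $\Omega\times\Omega$'', which in turn requires $|\Omega|\geq 2$ (otherwise the ``off-diagonal'' orbit is empty and $\chi$ is the zero function). This degenerate case is harmless, and in any event is excluded in every application we make of the theorem in what follows.
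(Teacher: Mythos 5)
Your proof is correct and is essentially the argument the paper is relying on: the paper gives no proof of its own here, simply citing James--Liebeck, and their proof is exactly this orbit-counting computation showing $\langle\pi,\pi\rangle=2$ and $\langle\pi,\mathbf{1}_G\rangle=1$, whence $\chi=\pi-\mathbf{1}_G$ is a genuine character of norm $1$. Nothing to add beyond your own (correct) caveat about $|\Omega|\geq 2$.
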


\begin{thm}\emph{\cite[p.~196]{JamesLiebeck}}
Let $G$ be a finite group and let $g\in G$.  Let $\chi$ be a faithful irreducible character of $G$, then $\chi^2(g)$ decomposes as the direct sum of a symmetric part $\frac{1}{2}(\chi^2(g)+\chi(g^2))$ and an antisymmetric part $\frac{1}{2}(\chi^2(g)-\chi(g^2))$.
\end{thm}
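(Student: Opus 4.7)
The plan is to prove this via the standard decomposition of a tensor square into its symmetric and antisymmetric components. Let $V$ be a $G$-module affording $\chi$, of dimension $n$. The $\mathbb{C}$-vector space $V\otimes V$ carries a commuting involution $\tau$ swapping the two tensor factors, whose $(+1)$-eigenspace $\mathrm{Sym}^2(V)$ is spanned by vectors $v\otimes w + w \otimes v$ and whose $(-1)$-eigenspace $\Lambda^2(V)$ is spanned by vectors $v\otimes w - w\otimes v$. Because $G$ acts diagonally on $V\otimes V$, it commutes with $\tau$, so both $\mathrm{Sym}^2(V)$ and $\Lambda^2(V)$ are $G$-submodules, and $V\otimes V = \mathrm{Sym}^2(V)\oplus\Lambda^2(V)$. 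The character of $V\otimes V$ is $\chi\cdot\chi = \chi^2$, so what remains is to identify the characters of the two summands at $g$.

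To compute these, I would diagonalize $g$ on $V$. Since $g$ has finite order its action on $V$ is diagonalizable over $\mathbb{C}$, so I can pick a basis $e_1,\ldots,e_n$ with $g\cdot e_i = \lambda_i e_i$; then $\chi(g)=\sum_i\lambda_i$ and $\chi(g^2)=\sum_i\lambda_i^2$. The vectors $e_i\otimes e_i$ for $1\le i \le n$ together with $e_i\otimes e_j + e_j\otimes e_i$ for $i<j$ form a basis of $\mathrm{Sym}^2(V)$ consisting of eigenvectors for $g$, with eigenvalues $\lambda_i^2$ and $\lambda_i\lambda_j$ respectively. Summing these eigenvalues,
\[
\sum_i \lambda_i^2 + \sum_{i<j}\lambda_i\lambda_j = \tfrac12\Bigl(\bigl(\textstyle\sum_i\lambda_i\bigr)^2 + \sum_i\lambda_i^2\Bigr) = \tfrac12\bigl(\chi(g)^2+\chi(g^2)\bigr).
\]
The analogous calculation on $\Lambda^2(V)$, using the basis $e_i\otimes e_j - e_j\otimes e_i$ for $i<j$, gives $\sum_{i<j}\lambda_i\lambda_j = \tfrac12(\chi(g)^2-\chi(g^2))$.

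There is no genuine obstacle here: the only things that need care are verifying $G$-invariance of the two summands (automatic, since $G$ commutes with $\tau$) and diagonalisability of $g$ (automatic from finite order). I note in passing that neither the irreducibility nor the faithfulness of $\chi$ plays any role in the argument; the decomposition holds for arbitrary characters, and these hypotheses presumably reflect only the intended context in which the result will be applied.
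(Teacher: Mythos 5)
Your proof is correct and is the standard argument: the paper itself gives no proof of this statement, citing it directly from James and Liebeck, and your diagonalisation of $g$ on the symmetric and antisymmetric summands of $V\otimes V$ is exactly the textbook derivation. Your closing observation is also right — neither faithfulness nor irreducibility of $\chi$ is used anywhere, and the decomposition holds for an arbitrary character.
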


\begin{thm}\emph{\cite[p.~236]{JamesLiebeck}} \label{InduceThm}
Let $G$ be a finite group, $x\in G$, $H\leq G$, and suppose that $H\cap x^G$ breaks up into $l$ conjugacy classes of $H$. If $\chi$ is a character of $H$, then:
\begin{equation}\label{InduceEQ}
(\chi\uparrow G)(x)=|C_G(x)|\left(\frac{\chi(x_1)}{|C_H(x_1)|}+\dots+\frac{\chi(x_l)}{|C_H(x_l)|} \right)
\end{equation}
where $x_1,\dots,x_l\in H$ are representative elements of the $l$ classes of $H$.
\end{thm}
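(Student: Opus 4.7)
The plan is to derive the formula directly from the standard definition–theorem for induced characters: if $T$ is a left transversal for $H$ in $G$ and $\chi^0$ denotes the extension of $\chi$ by zero outside $H$, then
\[
(\chi\uparrow G)(x) \;=\; \sum_{t\in T} \chi^0(t^{-1}xt) \;=\; \frac{1}{|H|}\sum_{g\in G} \chi^0(g^{-1}xg).
\]
From here everything is bookkeeping, organised around the set $H\cap x^G$.

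First I would discard the zero contributions: the right-hand sum only receives contributions from those $g\in G$ with $g^{-1}xg\in H$, in which case $\chi^0(g^{-1}xg)=\chi(g^{-1}xg)$. Next I would reparametrise by the image $h=g^{-1}xg$ rather than by $g$. A short orbit–stabiliser argument shows that for each fixed $h\in H\cap x^G$ the fibre $\{\,g\in G : g^{-1}xg = h\,\}$ is a left coset of $C_G(x)$, and so has cardinality exactly $|C_G(x)|$. The formula therefore simplifies to
\[
(\chi\uparrow G)(x) \;=\; \frac{|C_G(x)|}{|H|}\sum_{h\in H\cap x^G} \chi(h).
\]

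To finish, I would use the hypothesis that $H\cap x^G$ splits into $l$ conjugacy classes of $H$ with representatives $x_1,\dots,x_l$. Since $\chi$ is a class function on $H$ and the $H$-conjugacy class of $x_i$ has size $|H|/|C_H(x_i)|$, the sum above reorganises as
\[
\sum_{h\in H\cap x^G} \chi(h) \;=\; \sum_{i=1}^{l} \frac{|H|}{|C_H(x_i)|}\,\chi(x_i),
\]
and substituting this back and cancelling $|H|$ yields the claimed identity~\eqref{InduceEQ}. The argument is purely formal; the only step that requires any thought is the fibre count in the reparametrisation, and even that is immediate from the orbit–stabiliser theorem applied to the conjugation action of $G$ on itself. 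I do not foresee any genuine obstacle.
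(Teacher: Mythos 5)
Your argument is correct and is the standard textbook derivation of the induced-character formula; the paper itself offers no proof of this statement, citing it directly from James and Liebeck, where essentially the same computation (extend by zero, average over $G$, count fibres, regroup into $H$-classes) appears. One trivial slip: for fixed $h\in H\cap x^G$ with $g_0^{-1}xg_0=h$, the fibre $\{g\in G : g^{-1}xg=h\}$ is the \emph{right} coset $C_G(x)g_0$ rather than a left coset, but this does not affect the cardinality $|C_G(x)|$ and hence does not affect the proof.
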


\begin{thm}\emph{\cite[p.~264]{JamesLiebeck}}\label{ComplexCharsThm}
 The number of real characters of $G$ is the same as the number of real conjugacy classes of $G$.
\end{thm}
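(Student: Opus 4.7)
The plan is to realise both quantities as traces of conjugate matrices, and then invoke the invariance of trace under conjugation. First I would set up two natural involutions: complex conjugation $\chi \mapsto \overline{\chi}$ on the set of irreducible characters, whose fixed points are precisely the real characters; and inversion $[g] \mapsto [g^{-1}]$ on the set of conjugacy classes, whose fixed points are precisely the real classes. Encoding these as permutation matrices $P$ and $Q$ respectively (of the same size, since $|\mathrm{Irr}(G)|$ equals the number of conjugacy classes), one has that the number of real characters is $\mathrm{tr}(P)$ and the number of real classes is $\mathrm{tr}(Q)$.

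The bridge between the two is the character table matrix $X$, whose $(i,j)$-entry is $\chi_i(g_j)$ for chosen representatives $g_j$ of the conjugacy classes. The key identity is $\chi(g^{-1}) = \overline{\chi(g)}$, which, together with the definitions of $P$ and $Q$, yields the matrix equation $PX = \overline{X} = XQ$. Provided $X$ is invertible, this rearranges to $P = XQX^{-1}$, so $P$ and $Q$ are conjugate and therefore have equal trace, giving the desired equality.

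Invertibility of $X$ follows from the column orthogonality relations, which say that $\overline{X}^{\,\top} X$ is a diagonal matrix whose diagonal entries are the (nonzero) centralizer orders $|C_G(g_j)|$; in particular $X$ has nonzero determinant. The main obstacle in executing the plan is purely bookkeeping: one has to fix conventions for the two permutation matrices consistently so that the equation $PX = XQ$ genuinely holds with rows indexed by characters and columns by classes, and one has to be careful that the permutation induced by $\chi \mapsto \overline{\chi}$ really is an involution on $\mathrm{Irr}(G)$ (rather than merely on some larger set of characters). Once those conventions are in place, the argument collapses to the one-line fact that conjugate matrices have the same trace.
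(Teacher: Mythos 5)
Your argument is correct and complete: the identity $\chi(g^{-1})=\overline{\chi(g)}$ gives $PX=\overline{X}=XQ$, the column orthogonality relations make $X$ invertible, and comparing traces of the conjugate permutation matrices $P$ and $Q$ counts real characters against real classes. The paper does not prove this statement itself but cites it from James and Liebeck, and your proof is precisely the standard one given there (the Brauer permutation-matrix argument), so there is nothing to add.
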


Next we recall the definition of the inner product on characters: if $\chi$ and $\phi$ are characters of $G$, then we define
\[
 \langle \chi, \phi\rangle = \sum_{i=1}^k\frac{\chi(g_i)\overline{\phi}(g_i)}{|C_G(g_i)|}.
\]
The next theorem is a reminder of Schur's Orthogonality Relations; in particular it tells us how to use this inner product to recognise irreducible characters.

\begin{thm}\emph{\cite[p.~161]{JamesLiebeck}}\label{OrthRel}
Let $\chi_1,\dots,\chi_k$ be the irreducible characters of $G$ and let $g_1,\dots,g_k$ be representative elements of the conjugacy classes of $G$, then for every $r,s\in\{1,\dots,k\}$ we have:
\begin{align}
\langle \chi_r, \chi_s\rangle&=\delta_{rs}; \label{RowRel}\\
\sum_{i=1}^k\chi_i(g_r)\overline{\chi_i}(g_s)&=\delta_ {rs}|C_G(g_r)|. \label{ColumnRel}
\end{align}

\end{thm}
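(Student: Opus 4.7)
The plan is to deduce both orthogonality relations from Schur's Lemma applied to a natural averaging operator, and then pass from the row version to the column version by pure linear algebra.

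First I would fix unitary matrix representations $\rho_1,\dots,\rho_k$ affording the irreducible characters $\chi_1,\dots,\chi_k$, with degrees $d_1,\dots,d_k$. For any matrix $A$ of size $d_r \times d_s$, set
\[
T_A \;=\; \frac{1}{|G|}\sum_{g\in G}\rho_r(g)\,A\,\rho_s(g)^{-1}.
\]
A direct computation shows that $\rho_r(h)\,T_A = T_A\,\rho_s(h)$ for every $h \in G$, so $T_A$ intertwines $\rho_s$ with $\rho_r$. Schur's Lemma then forces $T_A = 0$ when $r \neq s$, while $T_A = \frac{\operatorname{tr}(A)}{d_r}I$ when $r = s$.

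Next, letting $A$ run through the elementary matrices $E_{ij}$ and reading off the $(k,\ell)$-entry of $T_A$ produces the orthogonality of matrix coefficients in $L^2(G)$: the functions $g\mapsto \rho_r(g)_{ij}$ form an orthogonal family, with known norms depending on $d_r$. Summing the diagonal coefficients gives back $\chi_r$, and combining these matrix-coefficient relations immediately yields (\ref{RowRel}).

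For the column relation, I would recast (\ref{RowRel}) in matrix form. Let $X$ be the $k\times k$ matrix with $X_{rs} = \chi_r(g_s)$, and let $D$ be the diagonal matrix with $D_{ss} = 1/|C_G(g_s)|$. Then (\ref{RowRel}) is exactly the statement $X D \overline{X}^{T} = I_k$. Hence $X$ is invertible, and multiplying by $X$ on the left and $X^{-1}$ on the right gives $D \overline{X}^{T} X = I_k$, which rearranges to (\ref{ColumnRel}). The main obstacle is the bookkeeping in the matrix-coefficient step: one must choose unitary representatives so that $\rho_s(g)^{-1} = \overline{\rho_s(g)}^{T}$, and then keep careful track of which pairs of coefficients survive the averaging. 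Once that is handled cleanly, both relations drop out without further effort.
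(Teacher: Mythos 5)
The paper states this theorem purely as quoted background, citing James--Liebeck without giving any proof, so there is no in-paper argument to compare against; your argument is the standard textbook proof (Schur's Lemma applied to the averaged intertwiner $T_A$ to get orthogonality of matrix coefficients and hence the row relation, then the observation that $XD\overline{X}^{T}=I_k$ forces $D\overline{X}^{T}=X^{-1}$ and so $D\overline{X}^{T}X=I_k$, which is the column relation). It is correct as written, modulo the harmless imprecision that one should say $D\overline{X}^{T}$ is the (two-sided) inverse of the square matrix $X$ rather than ``multiplying by $X$ on the left and $X^{-1}$ on the right.''
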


\subsection{The Character Table of \texorpdfstring{$G_9$}{G9}}\label{chG9}

We know that $G_9$ has 6 conjugacy classes and hence 6 irreducible characters. We also know that $G_9$ has a normal subgroup $N\cong C_3\times C_3$, with $G/N \cong Q_8$. We can lift 3 linear characters and a 2 dimensional character from $Q_8$. The final character $\chi_5$ is given by the 2-transitive action of $M_9$ on 9 points (Theorem~\ref{2transChar}).  The character table is given in Table~\ref{t: G9}.

\begin{table}[h!]
\[\begin{array}{c|cccccc}
G_9&1^9&2^4&3^3&4^2_A&4^2_B&4^2_C\\
\hline
\chi_0&1&1&1&1&1&1\\
\chi_1&1&1&1&-1&1&-1\\
\chi_2&1&1&1&1&-1&-1\\
\chi_3&1&1&1&-1&-1&1\\
\chi_4&2&-2&2&0&0&0\\
\chi_5&8&0&-1&0&0&0
\end{array}\]
 \caption{The character table of $G_{9}$}\label{t: G9}
\end{table}

\subsection{The Character table of \texorpdfstring{$G_{10}$}{G10}}\label{chG10}

Note that $G_{10}$ has 8 conjugacy classes, hence 8 irreducible characters.  Let $\chi_0$ denote the trivial character of $G_{10}$ and let $\chi_2$ denote the irreducible character obtained from the 3-transitive action of $G_{10}$.

\begin{center}\begin{tabular}{C{1cm}|C{0.5cm}C{0.5cm}C{0.5cm}C{0.5cm}C{0.5cm}C{0.5cm}C{0.7cm}C{0.7cm}}
$G_{10}$&$1^{10}$&$2^4$&$3^3$&$4^2_A$&$4^2_B$&$5^2$&$2^18^1_A$&$2^18^1_B$\\
\hline
$\chi_0$&1&1&1&1&1&1&1&1\\
$\chi_2$&9&1&0&1&1&$-1$&$-1$&$-1$\\
\end{tabular}\end{center}

We will now try inducing characters from the point stabilizer $G_9$. Let $\chi$ be a character of $G_9$, then using the centralizer orders and Theorem~\ref{InduceThm} we have:
    \[(\chi\uparrow G_{10})(g)=\begin{cases}
10\chi(g)&\text{if } g\in 1^{10};\\
2\chi(g)&\text{if } g\in 2^4,\ 4^2_A;\\
\chi(g_B)+\chi(g_C)&\text{if } g\in 4^2_B;\\
\chi(g)&\text{if } g\in 3^3;\\
0&\text{otherwise.}
\end{cases}\]
(Here $g_A$ and $g_B$ are elements from the $G_9$-classes $4^2_A$ and $4^2_B$ respectively.)
Let $\chi_A$, $\chi_B$ and $\chi_C$ denote the lifts of the characters $\chi_1$, $\chi_2$ and $\chi_4$ for $G_9$ respectively (see Table~\ref{t: G9}).
\begin{center}\begin{tabular}{C{1cm}|C{0.5cm}C{0.5cm}C{0.5cm}C{0.5cm}C{0.5cm}C{0.5cm}C{0.7cm}C{0.7cm}}
$\chi_A$&10&2&1&$-2$&0&0&0&0\\
$\chi_B$&10&2&1&2&$-2$&0&0&0\\
$\chi_C$&20&$-4$&2&0&0&0&0&0
\end{tabular}\end{center}
Taking inner products of the characters we find that $\chi_A$ is irreducible, $\langle\chi_B,\chi_B\rangle=2$ and $\langle\chi_C,\chi_C\rangle=2$.  To keep with our naming convention, relabel $\chi_A$ to $\chi_4$.  Our next strategy will be to construct the anti-symmetric parts of both $\chi_2^2$ and $\chi_4^2$, we will skip over the symmetric decomposition of these characters because they do not yield information that is useful to our endeavours.  Let $\chi_D$ and $\chi_E$ denote the antisymmetric components of $\chi_2^2$ and $\chi_4^2$ respectively.
\begin{center}\begin{tabular}{C{1cm}|C{0.5cm}C{0.5cm}C{0.5cm}C{0.5cm}C{0.5cm}C{0.5cm}C{0.7cm}C{0.7cm}}
$\chi_D$&36&$-4$&0&0&0&1&0&0\\
$\chi_E$&45&$-3$&0&1&$-1$&0&0&0
\end{tabular}\end{center}

Taking inner products of $\chi_D$ with known irreducible and compound characters we find that $\langle\chi_D,\chi_D\rangle=3$ and $\langle\chi_C,\chi_D\rangle=2$.  Let $\chi_7=\chi_D-\chi_C$; then $\langle\chi_7,\chi_7\rangle=1$ and, since $\chi_7(1)>0$, we conclude that $\chi_7$ is an irreducible character.  Repeating the process with $\chi_E$ we find that $\langle\chi_E,\chi_E\rangle=4$, $\langle\chi_7,\chi_E\rangle=1$, $\langle\chi_B,\chi_E\rangle=1$ and $\langle\chi_C,\chi_E\rangle=2$.  Let $\chi_3=\chi_E-\chi_7-\chi_C=\chi_E-\chi_D$, then $\langle\chi_3,\chi_3\rangle=1$ and, since $\chi_3(1)>0$, we conclude that $\chi_3$ is an irreducible character
\begin{center}\begin{tabular}{C{1cm}|C{0.5cm}C{0.5cm}C{0.5cm}C{0.5cm}C{0.5cm}C{0.5cm}C{0.7cm}C{0.7cm}}
$\chi_3$&9&1&0&1&$-1$&$-1$&1&1\\
$\chi_7$&16&0&$-2$&0&0&1&0&0
\end{tabular}\end{center}

Finally, observe that $\langle\chi_B,\chi_3\rangle=1$.  Then $\chi_1=\chi_B-\chi_3$ is irreducible. Using Theorem~\ref{ComplexCharsThm} and the fact that $G_{10}$ has two non-real classes, we conclude that the remaining two characters occur as a complex conjugate pair. These can, then, be calculated using the Schur orthogonality relations.

\begin{table}[h!]
\begin{center}\begin{tabular}{C{1cm}|C{0.5cm}C{0.5cm}C{0.5cm}C{0.5cm}C{0.5cm}C{0.5cm}C{0.7cm}C{0.7cm}}
$G_{10}$&$1^{10}$&$2^4$&$3^3$&$4^2_A$&$4^2_B$&$5^2$&$2^18^1_A$&$2^18^1_B$\\
\hline
$\chi_0$&1&1&1&1&1&1&1&1\\
$\chi_1$&1&1&1&1&$-1$&1&$-1$&$-1$\\
$\chi_2$&9&1&0&1&1&$-1$&$-1$&$-1$\\
$\chi_3$&9&1&0&1&$-1$&$-1$&1&1\\
$\chi_4$&10&2&1&$-2$&0&0&0&0\\
$\chi_5$&10&$-2$&1&0&0&0&$\omega$&$\overline\omega$\\
$\chi_6$&10&$-2$&1&0&0&0&$\overline\omega$&$\omega$\\
$\chi_7$&16&0&$-2$&0&0&1&0&0
\end{tabular}\end{center}
\caption{The character table of $G_{10}$, where $\omega=\sqrt{-2}$.}\label{M10.Char.Table}
\end{table}

\subsubsection{The Structure of \texorpdfstring{$G_{10}$}{G10}}
Using the character table, the following result can be easily derived about the structure of $G_{10}$.

\begin{thm}
The group $G_{10}$ is a non split extension of $C_2$ by a normal subgroup $K=\ker(\chi_1)$.
\end{thm}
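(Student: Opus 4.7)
The plan is to read off the structure of $G_{10}$ directly from the character table and then translate non-splitness into a statement about element orders. First I would note that $\chi_1(1)=1$ and that every entry of the $\chi_1$-row in Table~\ref{M10.Char.Table} lies in $\{\pm 1\}$; this forces $\chi_1\colon G_{10}\to\{\pm 1\}\cong C_2$ to be a group homomorphism, so $K=\ker(\chi_1)$ is a normal subgroup with $G_{10}/K\cong C_2$. Summing the class sizes in Table~\ref{cc G10} over the classes on which $\chi_1$ takes value $1$ (namely $1^{10}$, $2^4$, $3^3$, $4^2_A$, $5^2$) would confirm $|K|=1+45+80+90+144=360$, giving a short exact sequence
\[
1\longrightarrow K\longrightarrow G_{10}\longrightarrow C_2\longrightarrow 1.
\]

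The remaining step is to show that this sequence does not split. A splitting is equivalent to the existence of a subgroup of $G_{10}$ of order $2$ intersecting $K$ trivially, hence to the existence of an element of order $2$ in $G_{10}\setminus K$. The set $G_{10}\setminus K$ is precisely the union of the classes on which $\chi_1=-1$, namely $4^2_B$, $2^18^1_A$ and $2^18^1_B$; inspecting their cycle types, the elements there have orders $4$, $8$ and $8$ respectively. On the other hand, every involution of $G_{10}$ has cycle type $2^4$ and therefore lies in $K$. Hence no complement to $K$ exists and the extension is non-split.

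I do not anticipate a genuine obstacle: the only potentially delicate point is the equivalence between splitting the sequence and finding an order-$2$ element outside $K$, which is standard. Everything else reduces to reading off a single row of the character table and the cycle-type information in Table~\ref{cc G10}, with no further case analysis required.
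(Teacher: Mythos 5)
Your proposal is correct and follows essentially the same route as the paper: identify $K=\ker(\chi_1)$ as a normal subgroup of index $2$ from the linear character $\chi_1$, and observe that every involution of $G_{10}$ has cycle type $2^4$ and hence lies in $K$, so no order-$2$ complement exists. The extra details you supply (the order $|K|=360$ and the explicit list of classes outside $K$) are accurate and only make the argument more explicit than the paper's version.
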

\begin{proof}
As the kernels of characters are normal subgroups we see that $K\triangleleft G_{10}$ and moreover this is the only non trivial proper normal subgroup of $G_{10}$. Observe that $G_{10}/K\cong C_2$. Since there are no involutions in $G_{10}\setminus K$, we conclude that we have a non-split extension.
\end{proof}

\subsection{The Character Table of \texorpdfstring{$G_{11}$}{G11}}\label{chG11}

By assumption $G_{11}$ acts 4-transitively on a set of size 11.  By considering the number of fixed points of each conjugacy class we obtain a 10 dimensional irreducible character $\chi_1$ (Theorem~\ref{2transChar}).
\begin{center}\begin{tabular}{C{1cm}|C{0.5cm}C{0.5cm}C{0.5cm}C{0.5cm}C{0.5cm}C{0.9cm}C{0.6cm}C{0.6cm}C{0.5cm}C{0.5cm}}
$G_{11}$&$1^{11}$&$2^4$&$3^3$&$4^2$&$5^2$&$2^13^16^1$&$2^18^1_A$&$2^18^1_B$&$11^1_A$&$11^1_B$\\
\hline
$\chi_0$&$1$&$1$&$1$&$1$&$1$&$1$&$1$&$1$&$1$&$1$\\
$\chi_1$&$10$&$2$&$1$&$2$&$0$&$-1$&$0$&$0$&$-1$&$-1$\\
\end{tabular}\end{center}
Now, let $\chi_S$ and $\chi_A$ be the symmetric and antisymmetric decomposition of $\chi_1^2$.
\begin{center}\begin{tabular}{C{1cm}|C{0.5cm}C{0.5cm}C{0.5cm}C{0.5cm}C{0.5cm}C{0.9cm}C{0.6cm}C{0.6cm}C{0.5cm}C{0.5cm}}
$\chi_S$&55&7&1&3&0&1&1&1&0&0\\
$\chi_A$&45&-3&0&1&0&0&-1&-1&1&1
\end{tabular}\end{center}
A quick calculation gives $\langle\chi_S,\chi_S\rangle=3$ and $\langle\chi_A,\chi_A\rangle=1$.  Moreover, $\langle\chi_S,\chi_0\rangle=1$ and $\langle\chi_S,\chi_1\rangle=1$.  Define $\chi_8=\chi_A$ and $\chi_7=\chi_S-\chi_0-\chi_1$ and note that $\langle\chi_7,\chi_7\rangle=1$.  Hence we have found two new irreducible characters of $G_{11}$.
\begin{center}\begin{tabular}{C{1cm}|C{0.5cm}C{0.5cm}C{0.5cm}C{0.5cm}C{0.5cm}C{0.9cm}C{0.6cm}C{0.6cm}C{0.5cm}C{0.5cm}}
$\chi_7$&$44$&$4$&$-1$&$0$&$-1$&$1$&$0$&$0$&$0$&$0$\\
$\chi_8$&$45$&$-3$&$0$&$1$&$0$&$0$&$-1$&$-1$&$1$&$1$
\end{tabular}\end{center}

\subsubsection{Induction from \texorpdfstring{$G_{10}$}{G10}}
We will now try inducing characters from the 1-point stabilizer subgroup $G_{10}$.  Let $\chi$ be a character of $G_{10}$; then, using the centralizer orders and Theorem~\ref{InduceThm}, we have:
    \[(\chi\uparrow G_{11})(g)=\begin{cases}
11\chi(g)&\text{if } g\in 1^{10};\\
3\chi(g)&\text{if } g\in 2^4;\\
2\chi(g)&\text{if } g\in 3^3;\\
\chi(g_A)+2\chi(g_B)&\text{if } g\in 4^2;\\
\chi(g)&\text{if } g\in 5^2,\ 2^18^1_A,\ 2^18^1_B ;\\
0&\text{otherwise.}
\end{cases}\]
(Here $g_A$ and $g_B$ are elements from the $G_{10}$-classes $4^2_A$ and $4^2_B$ respectively.)
Let $\chi_4$ be the induced character of the non trivial linear character of $G_{10}$ and let $\chi_9$ be the the anti-symmetric decomposition of $\chi_4^2$.
\begin{center}\begin{tabular}{C{1cm}|C{0.5cm}C{0.5cm}C{0.5cm}C{0.5cm}C{0.5cm}C{0.9cm}C{0.6cm}C{0.6cm}C{0.5cm}C{0.5cm}}
$\chi_4$&$11$&$3$&$2$&$-1$&$1$&$0$&$-1$&$-1$&$0$&$0$\\
$\chi_9$&$55$&$-1$&$1$&$-1$&$0$&$-1$&$1$&$1$&$0$&$0$
\end{tabular}\end{center}
We find that $\langle\chi_4,\chi_4\rangle=1$ and $\langle\chi_9,\chi_9\rangle=1$, hence, they are both irreducible characters of $G_{11}$.

\subsubsection{Schur Orthogonality}
The remaining four characters come in complex conjugate pairs.  We can deduce this by using the fact that elements of cycle type $2^18^1$ and $11^1$ are not real and applying Theorem~\ref{ComplexCharsThm}.  Given that each pair will have the same dimension, we can attempt to calculate the dimension of these 4 remaining characters.  Let the dimension of the first pair be $d_1$ and the dimension of the second pair be $d_2$.

We have $d_1^2+d_1^2+d_2^2+d_2^2=712$, hence $d_1^2+d_2^2=356$.  By an exhaustive search we find that 356 can be expressed as the sum of two squares in exactly one way, that is $356=10^2+16^2$.  It immediately follows that $d_1=10$ and $d_2=16$.

Let $\chi_2(1)=\chi_3(1)=10$ and $\chi_5(1)=\chi_6(1)=16$.  We will now use the column relations to calculate the character values for the remaining conjugacy classes.  We use the fact that $\chi(g)=\overline{\chi(g^{-1})}$, and let the characters take the following values:
\begin{center}\begin{tabular}{C{1cm}|C{0.5cm}C{0.5cm}C{0.5cm}C{0.5cm}C{0.5cm}C{0.9cm}C{0.6cm}C{0.6cm}C{0.5cm}C{0.5cm}}
$\chi_2$&$11$&$x_1$&$x_2$&$x_3$&$x_4$&$x_5$&$x_6$&$\overline{x_6}$&$x_7$&$\overline{x_7}$\\
$\chi_3$&$11$&$x_1$&$x_2$&$x_3$&$x_4$&$x_5$&$\overline{x_6}$&$x_6$&$\overline{x_7}$&$x_7$\\
$\chi_5$&$16$&$y_1$&$y_2$&$y_3$&$y_4$&$y_5$&$y_6$&$\overline{y_6}$&$y_7$&$\overline{y_7}$\\
$\chi_6$&$16$&$y_1$&$y_2$&$y_3$&$y_4$&$y_5$&$\overline{y_6}$&$y_6$&$\overline{y_7}$&$y_7$\\
\end{tabular}\end{center}
Substituting the column containing $x_i$ for $i=1,\dots5$ and the first column of the character table into \eqref{ColumnRel} we can obtain values for $x_i$ and $y_i$.  We shall demonstrate this with $x_1$:
\begin{align}
 && 1+20+2\times10x_1+33+2\times16y_1+176-135-55&=0 \nonumber \\ 
\Longrightarrow && 5x_1+8y_1&=-10. \label{m11orth1}
\end{align}
Substituting the column containing $x_1$ into \eqref{ColumnRel} twice gives:
\begin{align}
&& 1+4+2x_1^2+9+2y_1^2+16+9+1&=48 \nonumber \\
\Longrightarrow && x_1^2+y_1^2&=4. \label{m11orth2}
\end{align}
Solving \eqref{m11orth1} and \eqref{m11orth2}, we obtain two solutions $x_1=-2,\ y_1=0$ and $x_1=\frac{78}{89},\ y_1=\frac{-160}{89}$.  The second set of these cannot be expressed as sum of $2$nd roots of unity.  Hence, $x_1=-2$ and $y_1=0$.  Continuing in this manner we obtain:
\begin{center}\begin{tabular}{C{1cm}|C{0.5cm}C{0.5cm}C{0.5cm}C{0.5cm}C{0.5cm}C{0.9cm}C{0.6cm}C{0.6cm}C{0.5cm}C{0.5cm}}
$\chi_2$&$10$&$-2$&$1$&$0$&$0$&$1$&$x_6$&$\overline{x_6}$&$x_7$&$\overline{x_7}$\\
$\chi_5$&$16$&$0$&$-2$&$0$&$1$&$0$&$y_6$&$\overline{y_6}$&$y_7$&$\overline{y_7}$\\
\end{tabular}\end{center}
The remaining values $x_6,\ x_7,\ y_6,$ and $y_7$ can be calculated by repeat applications of the row and column relations.  We find that $x_6=\sqrt{-2}$, $x_7=-1$, $y_6=0$ and $y_7=\frac{1}{2}(-1+\sqrt{-11})$.  Table~\ref{t: G11} shows the complete character table.

\begin{table}[h!]
\centering
\begin{tabular}{C{1cm}|C{0.5cm}C{0.5cm}C{0.5cm}C{0.5cm}C{0.5cm}C{0.9cm}C{0.6cm}C{0.6cm}C{0.5cm}C{0.5cm}}
$G_{11}$&$1^{11}$&$2^4$&$3^3$&$4^2$&$5^2$&$2^13^16^1$&$2^18^1_A$&$2^18^1_B$&$11^1_A$&$11^1_B$\\
\hline
$\chi_0$&$1$&$1$&$1$&$1$&$1$&$1$&$1$&$1$&$1$&$1$\\
$\chi_1$&$10$&$2$&$1$&$2$&$0$&$-1$&$0$&$0$&$-1$&$-1$\\
$\chi_2$&$10$&$-2$&$1$&$0$&$0$&$1$&$\alpha$&$\overline\alpha$&$-$1&$-1$\\
$\chi_3$&$10$&$-2$&$1$&$0$&$0$&$1$&$\overline\alpha$&$\alpha$&$-$1&$-1$\\
$\chi_4$&$11$&$3$&$2$&$-1$&$1$&$0$&$-1$&$-1$&$0$&$0$\\
$\chi_5$&$16$&$0$&$-2$&$0$&$1$&$0$&$0$&$0$&$\beta$&$\overline\beta$\\
$\chi_6$&$16$&$0$&$-2$&$0$&$1$&$0$&$0$&$0$&$\overline\beta$&$\beta$\\
$\chi_7$&$44$&$4$&$-1$&$0$&$-1$&$1$&$0$&$0$&$0$&$0$\\
$\chi_8$&$45$&$-3$&$0$&$1$&$0$&$0$&$-1$&$-1$&$1$&$1$\\
$\chi_9$&$55$&$-1$&$1$&$-1$&$0$&$-1$&$1$&$1$&$0$&$0$
\end{tabular}
\caption{The character table of $G_{11}$, where $\alpha=\sqrt{-2}$ and $\beta=\frac{1}{2}(-1+\sqrt{-11})$.}\label{t: G11}
\end{table}

\subsection{The Character Table of \texorpdfstring{$G_{12}$}{G12}}\label{chG12}

We begin by noting that $G_{12}$ has 15 conjugacy classes and 15 irreducible characters, one of which is the trivial character $\chi_0$.

\subsubsection{The Permutation Character and Tensor Products}
By assumption, $G_{12}$ acts 5-transitively on a set of size 12.  Hence, by Theorem~\ref{2transChar}, we get the permutation character $\chi_1$.
\begin{center}
\begin{tabular}{C{1cm}|C{0.5cm}C{0.5cm}C{0.5cm}C{0.5cm}C{0.5cm}C{0.6cm}C{0.6cm}C{0.5cm}C{0.9cm}C{0.5cm}C{0.6cm}C{0.65cm}C{0.7cm}C{0.6cm}C{0.6cm}}
$G_{12}$&$1^{12}$&$2^4$&$2^6$&$3^3$&$3^4$&$4^2$&$2^24^2$&$5^2$&$2^13^16^1$&$6^2$&$2^18^1$&$4^18^1$&$2^110^1$&$11^1_A$&$11^1_B$\\
\hline
$\chi_0$&$1$&$1$&$1$&$1$&$1$&$1$&$1$&$1$&$1$&$1$&$1$&$1$&$1$&$1$&$1$\\
$\chi_1$&$11$&$-1$&$3$&$2$&$-1$&$3$&$-1$&$1$&$0$&$-1$&$-1$&$1$&$-1$&$0$&$0$
\end{tabular}
\end{center}
Now, let $\chi_S$ and $\chi_A$ be the symmetric and antisymmetric decomposition of $\chi_1^2$.
\begin{center}\begin{tabular}{C{1cm}|C{0.5cm}C{0.5cm}C{0.5cm}C{0.5cm}C{0.5cm}C{0.6cm}C{0.6cm}C{0.5cm}C{0.9cm}C{0.5cm}C{0.6cm}C{0.65cm}C{0.7cm}C{0.6cm}C{0.6cm}}
$\chi_S$&$66$&$10$&$6$&$3$&$0$&$6$&$2$&$1$&$1$&$0$&$2$&$0$&$1$&$0$&$0$\\
$\chi_A$&$55$&$-1$&$-5$&$1$&$1$&$3$&$-1$&$0$&$-1$&$1$&$-1$&$1$&$0$&$0$&$0$
\end{tabular}\end{center}
A quick calculation gives $\langle\chi_S,\chi_S\rangle=3$ and $\langle\chi_A,\chi_A\rangle=1$.  Moreover, $\langle\chi_S,\chi_0\rangle=1$ and $\langle\chi_S,\chi_1\rangle=1$.  Define $\chi_8=\chi_A$ and $\chi_6=\chi_S-\chi_0-\chi_1$.  We have found two new irreducible characters of $G_{12}$.  Squaring these characters, however, is not a viable plan; the characters obtained have dimensions 2916 and 3025.

\begin{center}\begin{tabular}{C{1cm}|C{0.5cm}C{0.5cm}C{0.5cm}C{0.5cm}C{0.5cm}C{0.6cm}C{0.6cm}C{0.5cm}C{0.9cm}C{0.5cm}C{0.6cm}C{0.65cm}C{0.7cm}C{0.6cm}C{0.6cm}}
$\chi_6$&$54$&$6$&$6$&$0$&$0$&$2$&$2$&$-1$&$0$&$0$&$0$&$0$&$1$&$-1$&$-1$\\
$\chi_8$&$55$&$-1$&$-5$&$1$&$1$&$3$&$-1$&$0$&$-1$&$1$&$-1$&$1$&$0$&$0$&$0$
\end{tabular}\end{center}

\subsubsection{Induction from \texorpdfstring{$G_{11}$}{G11}}
We will now try inducing characters from the subgroup $G_{11}$. Let $\chi$ be a character of $G_{11}$; then using the centralizer orders and Theorem~\ref{InduceThm} we have:
\[(\chi\uparrow G_{12})(g)=\begin{cases}
12\chi(g)&\text{if } g\in 1^{12};\\
4\chi(g)&\text{if } g\in 2^4,4^2;\\
3\chi(g)&\text{if } g\in 3^3;\\
2\chi(g)&\text{if } g\in 5^2;\\
\chi(g)&\text{if } g\in 2^13^16^1,11^1_A,11^1_B;\\
\chi(g_A)+\chi(g_B)&\text{if } g\in 2^18^1;\\
0&\text{otherwise.}
\end{cases}\]
(Here $g_A$ and $g_B$ are elements from the $G_{11}$-classes $2^18^1_A$ and $2^18^1_B$ respectively.)
Inducing the trivial character of $G_{11}$ gives a character equal to $\chi_0+\chi_1$.  Inducing the integer valued 10-dimensional character gives a character equal to $\chi_1+\chi_6+\chi_8$.  Now, let $\chi_{12}$ be the induced character of a complex valued 10-dimensional character of $G_{11}$.  We see that $\langle\chi_{12},\chi_{12}\rangle=1$, therefore, $\chi_{12}$ is irreducible.

Let $\chi_V$ be the character obtained by inducing the $55$-dimensional character of $G_{11}$.  Finally, let $\chi_B$ be the induced complex valued 16-dimensional character of $G_{11}$ and let $\omega=\frac{1}{2}(-1+\sqrt{-11})$, then we have:
\begin{center}\begin{tabular}{C{1cm}|C{0.5cm}C{0.5cm}C{0.5cm}C{0.5cm}C{0.5cm}C{0.6cm}C{0.6cm}C{0.5cm}C{0.9cm}C{0.5cm}C{0.6cm}C{0.65cm}C{0.7cm}C{0.6cm}C{0.6cm}}
$\chi_{12}$&$120$&$-8$&$0$&$3$&$0$&$0$&$0$&$0$&$0$&$1$&$0$&$0$&$0$&$-1$&$-1$\\
$\chi_V$&$660$&$-4$&$0$&$3$&$0$&$0$&$-4$&$0$&$0$&$-1$&$2$&$0$&$0$&$0$&$0$\\
$\chi_{B}$&$192$&$0$&$0$&$-6$&$0$&$0$&$0$&$2$&$0$&$0$&$0$&$0$&$0$&$\omega$&$\overline{\omega}$
\end{tabular}\end{center}
Note that $\langle\chi_B,\chi_B\rangle=2$ and $\langle\chi_V,\chi_V\rangle=6$, but the inner product of $\chi_B$ with any known irreducible is $0$.  We have $\langle \chi_V,\chi_{12}\rangle=1$, but the inner product is 0 for any other known irreducibles.  These do not give us any new irreducible characters, but we will use these characters later.

\subsubsection{Restriction from \texorpdfstring{$\symme(12)$}{Sym(12)}}
Using the Frobenius character formula, it is possible to construct low dimensional characters of $\symme(12)$ evaluated over the conjugacy classes of $G_{12}$; recall that these characters are labeled with partitions of $12$. In this section we consider the restriction of some of these characters to the group $G_{12}$; for instance we note that $(\chi_{(11,1)}\downarrow G_{12})=\chi_1$, $(\chi_{(10,2)}\downarrow G_{12})=\chi_2$ and $(\chi_{(10,1,1)}\downarrow G_{12})=\chi_3$. 

The table below gives six new characters that we have constructed in this way. Note that we abuse notation here: given a partition, $\lambda$, of $12$, we would normally write $\chi_\lambda$ for the associated character of $\symme(12)$ whereas here we write $\chi_\lambda$ for the restriction of the associated character to $G_{12}$. Note too that, for ease of notation, we let $\lambda_A=(9,1,1,1)$ and $\lambda_B=(8,1,1,1,1)$.\footnote{The history of this sort of restriction is worth a note: a classical result of Frobenius asserts that if $t$ is a natural number with $t<n/2$, then a subgroup $G<\symme(n)$ is $2t$-transitive if and only if every character of $\symme(n)$ labeled by a partition $(\lambda_1, \lambda_2,\dots, \lambda_a)$ with $\lambda_2+\cdots+\lambda_a\leq t$ remains irreducible when restricted to $G$. This result appears in \cite{frobenius}, the same paper in which Frobenius calculates the character tables of $M_{12}$ and $M_{24}$; indeed Frobenius makes use of this result in his calculation of these tables. Note that Theorem~\ref{2transChar} is a special case of Frobenius' result. A modern version of Frobenius result, making use of the Classification of Finite Simple Groups, was given in a beautiful paper of Saxl \cite{saxl} -- his theorem considers $G$, a subgroup of $\symme(n)$ or of $\alter(n)$ and $\chi$, an ordinary character of $\symme(n)$ or of $\alter(n)$, and he describes all pairs $(\chi, G)$ where the restriction of $\chi$ to $G$ is irreducible.  In our proof we do not make use of Frobenius' (or Saxl's) result -- it is enough for us to be able to calculate the restriction of various characters of $\symme(12)$ directly and naively.}

\begin{center}\begin{tabular}{C{1cm}|C{0.5cm}C{0.5cm}C{0.5cm}C{0.5cm}C{0.5cm}C{0.6cm}C{0.6cm}C{0.5cm}C{0.9cm}C{0.5cm}C{0.6cm}C{0.65cm}C{0.7cm}C{0.6cm}C{0.6cm}}
$G_{12}$&$1^{12}$&$2^4$&$2^6$&$3^3$&$3^4$&$4^2$&$2^24^2$&$5^2$&$2^13^16^1$&$6^2$&$2^18^1$&$4^18^1$&$2^110^1$&$11^1_A$&$11^1_B$\\
\hline
$\chi_{(9,3)}$&$154$&$10$&$-6$&$1$&$4$&$-2$&$-2$&$-1$&$1$&$0$&$0$&$0$&$-1$&$0$&$0$\\
$\chi_{\lambda_A}$&$165$&$-11$&$5$&$3$&$3$&$1$&$1$&$0$&$1$&$-1$&$-1$&$-1$&$0$&$0$&$0$\\
$\chi_{(8,4)}$&$275$&$11$&$15$&$5$&$-4$&$-1$&$3$&$0$&$-1$&$0$&$-1$&$1$&$0$&$0$&$0$\\
$\chi_{(7,5)}$&$297$&$9$&$-15$&$0$&$0$&$5$&$-3$&$2$&$0$&$0$&$-1$&$-1$&$0$&$0$&$0$\\
$\chi_{(3,2,1)}$&$320$&$0$&$0$&$-4$&$-4$&$0$&$0$&$0$&$0$&$0$&$0$&$0$&$0$&$1$&$1$\\
$\chi_{\lambda_B}$&$330$&$-6$&$10$&$6$&$-3$&$-2$&$-2$&$0$&$1$&$0$&$0$&$0$&$0$&$0$&$0$
\end{tabular}\end{center}

We first check the inner product of each character with itself and then with each of the known irreducibles.  We find that the $\langle\chi_{\lambda_A},\chi_{\lambda_A}\rangle=2$ and $\langle\chi_{\lambda_A},\chi_{12}\rangle=1$, define $\chi_5=\chi_{\lambda_A}-\chi_{12}$ and note that $\langle\chi_5,\chi_5\rangle=1$.  Hence, we have found a new irreducible character of $G_{12}$.

\begin{center}\begin{tabular}{C{1cm}|C{0.5cm}C{0.5cm}C{0.5cm}C{0.5cm}C{0.5cm}C{0.6cm}C{0.6cm}C{0.5cm}C{0.9cm}C{0.5cm}C{0.6cm}C{0.65cm}C{0.7cm}C{0.6cm}C{0.6cm}}
$\chi_5$&$45$&$-3$&$5$&$0$&$3$&$1$&$1$&$0$&$0$&$-1$&$-1$&$-1$&$0$&$1$&$1$\\
\end{tabular}\end{center}

We find that $\langle\chi_{\lambda_B},\chi_{\lambda_B}\rangle=3$ and that $\langle\chi_{\lambda_B},\chi_{12}\rangle=1$, hence we define $\chi_X=\chi_{\lambda_B}-\chi_{12}$. Similarly we find that $\langle\chi_{(8,4)},\chi_{(8,4)}\rangle=4$ and that $\langle\chi_{(8,4)},\chi_6\rangle=1$, hence we define $\chi_Y=\chi_{(8,4)}-\chi_6$.  Checking the inner products of $\chi_X$ and $\chi_Y$ with themselves and each other we obtain $\langle\chi_X,\chi_X\rangle=2$, $\langle\chi_Y,\chi_Y\rangle=3$ and $\langle\chi_X,\chi_Y\rangle=2$.  Define $\chi_2=\chi_Y-\chi_X$, we find that $\langle\chi_2,\chi_2\rangle=1$; we have found a new irreducible character of $G_{12}$.

\begin{center}\begin{tabular}{C{1cm}|C{0.5cm}C{0.5cm}C{0.5cm}C{0.5cm}C{0.5cm}C{0.6cm}C{0.6cm}C{0.5cm}C{0.9cm}C{0.5cm}C{0.6cm}C{0.65cm}C{0.7cm}C{0.6cm}C{0.6cm}}
$\chi_Y$&$221$&$5$&$9$&$5$&$-4$&$-3$&$1$&$1$&$-1$&$0$&$-1$&$1$&$-1$&$1$&$1$\\
$\chi_X$&$210$&$2$&$10$&$3$&$-3$&$-2$&$-2$&$0$&$1$&$-1$&$0$&$0$&$0$&$1$&$1$\\
\hline
$\chi_2$&$11$&$3$&$-1$&$2$&$-1$&$-1$&$3$&$1$&$0$&$-1$&$-1$&$1$&$-1$&$0$&$0$
\end{tabular}\end{center}

Let $\chi_S$ and $\chi_A$ be the symmetric and antisymmetric decomposition of $\chi_2^2$.  We find that $\chi_S=\chi_0+\chi_2+\chi_{6}$ and $\chi_9=\chi_A$ is a new irreducible.

\begin{center}\begin{tabular}{C{1cm}|C{0.5cm}C{0.5cm}C{0.5cm}C{0.5cm}C{0.5cm}C{0.6cm}C{0.6cm}C{0.5cm}C{0.9cm}C{0.5cm}C{0.6cm}C{0.65cm}C{0.7cm}C{0.6cm}C{0.6cm}}
$\chi_9$&$55$&$-1$&$-5$&$1$&$1$&$3$&$-1$&$0$&$-1$&$1$&$1$&$-1$&$0$&$0$&$0$\\
\end{tabular}\end{center}

We will now check the inner product of every restricted character and $\chi_V$ with our known irreducible characters of $G_{12}$.
\[\begin{array}{|c|cccccccc|}
\hline
\text{Character}&\chi_0&\chi_1&\chi_2&\chi_5&\chi_6&\chi_8&\chi_9&\chi_{12}\\
\hline
\chi_{(9,3)}&0&0&0&0&0&0&0&0\\
\chi_{\lambda_A}&0&0&0&1&0&0&0&1\\
\chi_{(8,4)}&0&0&1&0&1&0&0&0\\
\chi_{(7,5)}&0&1&0&0&1&0&0&0\\
\chi_{(3,2,1)}&0&0&0&0&0&0&0&0\\
\chi_{\lambda_B}&0&0&0&0&0&0&0&1\\
\chi_V&0&0&0&1&0&0&0&1\\
\hline
\end{array}\]
Define the following characters:
\begin{align*}
\chi_C&=\chi_{(9,3)}\\
\chi_D&=\chi_{(7,5)}-\chi_1-\chi_6\\
\chi_E&=\chi_{(3,2,1)}\\
\chi_F&=\chi_{\lambda_B}-\chi_{12}\\
\chi_W&=\chi_V-\chi_5-\chi_{12}
\end{align*}
The values of these are as follows.
\begin{center}\begin{tabular}{C{1cm}|C{0.5cm}C{0.5cm}C{0.5cm}C{0.5cm}C{0.5cm}C{0.6cm}C{0.6cm}C{0.5cm}C{0.9cm}C{0.5cm}C{0.6cm}C{0.65cm}C{0.7cm}C{0.6cm}C{0.6cm}}
$G_{12}$&$1^{12}$&$2^4$&$2^6$&$3^3$&$3^4$&$4^2$&$2^24^2$&$5^2$&$2^13^16^1$&$6^2$&$2^18^1$&$4^18^1$&$2^110^1$&$11^1_A$&$11^1_B$\\
\hline
$\chi_C$&$154$&$10$&$-6$&$1$&$4$&$-2$&$-2$&$-1$&$1$&$0$&$0$&$0$&$-1$&$0$&$0$\\
$\chi_D$&$231$&$7$&$-9$&$-1$&$0$&$-1$&$-1$&$1$&$1$&$0$&$-1$&$-1$&$1$&$0$&$0$\\
$\chi_E$&$320$&$0$&$0$&$-4$&$-4$&$0$&$0$&$0$&$0$&$0$&$0$&$0$&$0$&$1$&$1$\\
$\chi_F$&$210$&$2$&$10$&$3$&$-3$&$-2$&$-2$&$0$&$1$&$-1$&$0$&$0$&$0$&$1$&$1$\\
$\chi_W$&$485$&$5$&$5$&$-1$&$-1$&$-3$&$-3$&$0$&$-1$&$-1$&$1$&$1$&$0$&$1$&$1$
\end{tabular}\end{center}

Taking the inner products of each of these new characters with each other gives the following:
\[\begin{array}{|c|ccccc|}
\hline
&\chi_C&\chi_D&\chi_E&\chi_F&\chi_W\\
\hline
\chi_C&2&1&0&0&1\\
\chi_D&1&2&1&0&1\\
\chi_E&0&1&2&1&2\\
\chi_F&0&0&1&2&2\\
\chi_W&1&1&2&2&4\\
\hline
\end{array}\]

Note, first, that this table of values implies that each of these characters is the sum of distinct irreducibles. Writing these irreducibles as $\alpha, \beta, \gamma,$ and so on, we see immediately that we can write
\begin{align*}
 \chi_C &=\alpha+\beta; \\
 \chi_D &= \beta+\gamma; \\
 \chi_E &= \gamma+\delta; \\
 \chi_F &= \gamma+\epsilon; \\
 \chi_W &=\alpha+\gamma+\delta+\epsilon.
\end{align*}
Now one obtains that $\alpha=\frac12(\chi_W-\chi_F+\chi_C-\chi_D)$. Once we have $\alpha$ it is an easy matter to obtain the other four irreducibles using the equalities just given. We therefore have five new irreducibles which we label as follows:
\[
 \alpha=\chi_{11}, \, \beta=\chi_7, \, \gamma=\chi_{14}, \, \delta=\chi_{13}, \, \epsilon=\chi_{10}.
\]

Finally, we return to the character $\chi_B$ from earlier. We find that $\langle\chi_B,\chi_{14}\rangle=1$, and so we define $\chi_3=\chi_B-\chi_{14}$. Letting $\chi_4$ be the complex conjugate of $\chi_3$, we obtain our final two irreducibles.  The full character table is given in Table~\ref{t: G12}.

\begin{table}
\centering
\begin{tabular}{C{1cm}|C{0.5cm}C{0.5cm}C{0.5cm}C{0.5cm}C{0.5cm}C{0.6cm}C{0.6cm}C{0.5cm}C{0.9cm}C{0.5cm}C{0.6cm}C{0.65cm}C{0.7cm}C{0.6cm}C{0.6cm}}
$G_{12}$&$1^{12}$&$2^4$&$2^6$&$3^3$&$3^4$&$4^2$&$2^24^2$&$5^2$&$2^13^16^1$&$6^2$&$2^18^1$&$4^18^1$&$2^110^1$&$11^1_A$&$11^1_B$\\
\hline
$\chi_0$&$1$&$1$&$1$&$1$&$1$&$1$&$1$&$1$&$1$&$1$&$1$&$1$&$1$&$1$&$1$\\
$\chi_1$&$11$&$3$&$-1$&$2$&$-1$&$3$&$-1$&$1$&$0$&$-1$&$1$&$-1$&$-1$&$0$&$0$\\
$\chi_2$&$11$&$3$&$-1$&$2$&$-1$&$-1$&$3$&$1$&$0$&$-1$&$-1$&$1$&$-1$&$0$&$0$\\
$\chi_3$&$16$&$0$&$4$&$-2$&$1$&$0$&$0$&$1$&$0$&$1$&$0$&$0$&$-1$&$\omega$&$\overline\omega$\\
$\chi_4$&$16$&$0$&$4$&$-2$&$1$&$0$&$0$&$1$&$0$&$1$&$0$&$0$&$-1$&$\overline\omega$&$\omega$\\
$\chi_5$&$45$&$-3$&$5$&$0$&$3$&$1$&$1$&$0$&$0$&$-1$&$-1$&$-1$&$0$&$1$&$1$\\
$\chi_6$&$54$&$6$&$6$&$0$&$0$&$2$&$2$&$-1$&$0$&$0$&$0$&$0$&$1$&$-1$&$-1$\\
$\chi_7$&$55$&$7$&$-5$&$1$&$1$&$-1$&$-1$&$0$&$1$&$1$&$-1$&$-1$&$0$&$0$&$0$\\
$\chi_8$&$55$&$-1$&$-5$&$1$&$1$&$-1$&$3$&$0$&$-1$&$1$&$-1$&$1$&$0$&$0$&$0$\\
$\chi_9$&$55$&$-1$&$-5$&$1$&$1$&$3$&$-1$&$0$&$-1$&$1$&$1$&$-1$&$0$&$0$&$0$\\
$\chi_{10}$&$66$&$2$&$6$&$3$&$0$&$-2$&$-2$&$1$&$-1$&$0$&$0$&$0$&$1$&$0$&$0$\\
$\chi_{11}$&$99$&$3$&$-1$&$0$&$3$&$-1$&$-1$&$-1$&$0$&$-1$&$1$&$1$&$-1$&$0$&$0$\\
$\chi_{12}$&$120$&$-8$&$0$&$3$&$0$&$0$&$0$&$0$&$1$&$0$&$0$&$0$&$0$&$-1$&$-1$\\
$\chi_{13}$&$144$&$0$&$4$&$0$&$-3$&$0$&$0$&$-1$&$0$&$1$&$0$&$0$&$-1$&$1$&$1$\\
$\chi_{14}$&$176$&$0$&$-4$&$-4$&$-1$&$0$&$0$&$1$&$0$&$-1$&$0$&$0$&$1$&$0$&$0$
\end{tabular}
\caption{The character table of $G_{12}$, where $\omega=\frac{1}{2}(-1+\sqrt{-11})$.}\label{t: G12}
\end{table}

\section{Final remarks}

One can read off many properties of the groups $G_{11}$ and $G_{12}$ by looking at the character tables that we have constructed. Note, for instance, that all of the irreducibles of the two groups have trivial kernel; one concludes immediately that $G_{11}$ and $G_{12}$ are simple.

We saw above, in \S\ref{ccG9}, that $G_9\cong (C_3\times C_3)\rtimes Q_8$. We should note that, although we have not deduced the isomorphism types of $G_{10}$, $G_{11}$ and $G_{12}$, in each case it is well-known that they are unique up to group isomorphism. Indeed $G_{10}\cong M_{10}$, the unique non-split degree 2 extension of $\alter(6)$, while $G_{11}\cong M_{11}$ and $G_{12}\cong M_{12}$, the two smallest sporadic simple groups of Mathieu.

\newcommand{\etalchar}[1]{$^{#1}$}
\providecommand{\bysame}{\leavevmode\hbox to3em{\hrulefill}\thinspace}
\providecommand{\MR}{\relax\ifhmode\unskip\space\fi MR }
\providecommand{\MRhref}[2]{%
  \href{http://www.ams.org/mathscinet-getitem?mr=#1}{#2}
}
\providecommand{\href}[2]{#2}

\end{document}